\newtheorem{theorem}{Theorem}[section]
\newtheorem{lemma}{Lemma}[section]
\newtheorem{remark}{Remark}[section]
\newtheorem{definition}{Definition}[section]
\begin{document}

%\begin{frontmatter}

\title{Stability of a stochastically perturbed model of intracellular single-stranded RNA virus replication}\footnote{Preprint of paper published in \emph{Journal of Biological Systems} \textbf{27}(1), 69--82 (2019).\\
DOI: 10.1142/S0218339019500049 }

\author[L. Shaikhet, S.F. Elena and A. Korobeinikov]
{Leonid Shaikhet, Santiago F. Elena and Andrei Korobeinikov$^{*}$}

\maketitle

%\fancyhead[OLH]{field}

\medskip
{\footnotesize
\centerline{Department of Mathematics, Ariel University, Ariel 40700, Israel}
%\centerline{ }
\centerline{leonid.shaikhet@usa.net}}

\medskip
{\footnotesize
\centerline{Instituto de Biolog\'{i}a Integrativa de Sistemas (I2SysBio), }
\centerline{Consejo Superior de Investigaciones Cient\'{i}ficas-Universitat de Val\`{e}ncia,}
\centerline{Catedr\'{a}tico Agust\'{i}n Escardino 9, 46980 Paterna, Val\`{e}ncia, Spain}
\centerline{and}
\centerline{Santa Fe Institute, }
\centerline{1399 Hyde Park Road, Santa Fe, NM 87501, USA }
\centerline{santiago.elena@uv.es}}

\medskip
{\footnotesize
\centerline{Centre de Recerca Matem\`{a}tica, Campus de Bellaterra, Edifici C,}
\centerline{08193 Bellaterra, Barcelona, Spain}
\centerline{and}
\centerline{Departament de Matem\`{a}tiques, Universitat Aut\`{o}noma de Barcelona,}
\centerline{Campus de Bellaterra, Edifici C, 08193 Barcelona, Spain}
\centerline{akorobeinikov@crm.cat}}

\bigskip

{\centering \textbf{Proposed running head:} Stability of an RNA virus replication model
\vspace{1cm}\par}

{\centering \textbf{AMS Classification (MSC2010)}\\
 92D30 (primary), 34D20, 60H10 (secondary)\par}

{\centering \vspace{1cm}\par}

%\pagebreak

%\author[a]{Leonid Shaikhet}               % e-mail address
%\address[a]{School of Electrical Engineering, Tel-Aviv University,\\
%Tel-Aviv 69978, Israel\\
%leonid.shaikhet@usa.net}
%\author[b]{Santiago F. Elena}
%\address[b]{Instituto de Biolog\'{\i}a Molecular y Celular de Plantas, \\
%Consejo Superior de Investigaciones Cient\'{\i}ficas-Universitat Polit\`{e}cnica de Val\`{e}ncia,\\
%Ingeniero Fausto Elio s/n, 46022 Val\`{e}ncia, Spain\\
%sfelena@ibmcp.upv.es}
%\author[c]{Andrei Korobeinikov}
%\address[c]{Centre de Recerca Matem\'{a}tica, Campus de Bellaterra, Edifici C,\\
%08193 Bellaterra, Barcelona, Spain\\
%akorobeinikov@crm.cat}
%
%\maketitle

%\date{\today}

\begin{abstract}

Compared to the replication of double-stranded RNA and DNA viruses, the replication of single-stranded viruses requires the production of a number of intermediate strands that  serve as templates
for the synthesis of genomic-sense strands. Two theoretical extreme mechanisms for replication for such single-stranded viruses have been proposed;  one extreme being represented by the so-called linear stamping machine
and the opposite extreme by the exponential growth. Of course, real systems are more complex and examples have been described in which a combination of such extreme mechanisms can also occur:
a fraction of the produced progeny  resulting from a stamping-machine type of replication that uses
the parental genome as template, whereas others fraction of the progeny  results from the replication of other progeny genomes.
Mart\'{\i}nez et al.~\cite{Matrinez_et-al-2011},  Sardany\'{e}s at al.~\cite{Sardanyes-et-al-2012} and Forn\'{e}s et al.~\cite{Fornes-et-al-2019} suggested and analyzed a deterministic model of single-stranded RNA (ssRNA) virus intracellular replication  that incorporated variability
in the replication mechanisms.

To explore how stochasticity can affect this mixed-model principal properties, in this paper we consider
the stability of a stochastically perturbed model of ssRNA virus replication within a cell.
Using the direct Lyapunov method, we found sufficient conditions for the stability in probability
of equilibrium states for this model. This result confirms that this heterogeneous model of
single-stranded RNA virus replication is stable with respect to stochastic perturbations of
the environment.
%replication modes

\medskip

%\begin{keyword}
\textbf{Keywords}: Virus dynamics, ssRNA virus, viral replication, viral mutations,
viral evolution, mathematical model, stochastic model, stability, direct Lyapunov method,
Lyapunov function
%\end{keyword}
\end{abstract}

\vspace{10mm}
\textbf{Acknowledgments:}
%Leonid Shaikhet is supported by the Israel Science Foundation
%via grant No. 1128/14 and the Israeli Ministry of Absorption.

Santiago F. Elena is supported by Spain's Ministerio de Ciencia, Innovaci\'{o}n y Universidades
grant BFU2015-65037-P.

Andrei Korobeinikov is supported by the Spain's Ministerio de Ciencia, Innovaci\'{o}n y Universidades
grant MTM2015-71509-C2-1-R.

%\pagebreak

% \end{frontmatter}

\section{Introduction}

RNA viruses are the most abundant pathogens of bacteria, plants, animals and humans and the
largest source of new emerging infectious diseases.  Moreover, the genomic simplicity,
combined with the high levels of mutability and evolvability typical for RNA viruses,
makes them excellent experimental models in evolutionary biology and provides further motivations
for their study.

The fast replication, large population sizes and high mutation rates that are typical
for RNA viruses lead to a high diversity of genotypes in the viral population replicating within
a single host. This is usually referred to in the literature as viral
quasispecies~\cite{Domingo-1976,Domingo-1978,Eigen,Eigen-Schuster} and can be roughly defined
as a master sequence surrounded by a cloud of mutant genomes that is maintained in a balance
between mutation and selection.
The generation and maintenance of master and mutant genomes distribution in a quasispecies
depends, to a large extent, on a replication pattern that each particular RNA virus adopts
~\cite{Sardanyes2010,Elena-2010}.  The majority of theoretical quasispecies models in the literature
assumed an exponential or, more generally, geometric replication mechanism (hereafter GR)
of RNA populations.  For single-stranded RNA (ssRNA) viruses, the replication of the viral
genome is the result of an RNA-to-RNA transcription process.  In the case of ssRNA viruses,
this process includes the synthesis of an intermediate antigenomic RNA of complementary
polarity that serves as the template for transcription of new genomic RNA progeny.
For the geometric replication, both the genomic and antigenomic viral strands are used
with equal probability as templates for replication.  That is, in the GR mode mutant genomes
also serve as templates for replication, and, hence, transcription errors are geometrically
amplified. As a result, the mutational load accumulated in the replicating population
is very large. The distribution of the number of mutants per infected cell for the GR is
described by the Luria-Delbr\"{u}ck distribution~\cite{Dewanji-2005} and characterized by a variance larger than the mean. This distribution has
been observed in experimental studies done with bacteriophage \emph{T}2~\cite{Luria-1951(50)}.

An alternative replication mode exhibited by ssRNA viruses is the so-called stamping machine
replication mode (SMR). For this replication model, the initially infecting genomic strand
is used as the template for the production of one or few antigenomic strands, which are
subsequently used as templates for the generation of all the progeny of the genomic-sense strands.
This progeny is to be encapsidated and released by the cell to continue the infection process.
For the SMR mode, the mutation frequency remains approximately constant and proportional to the error
rate of the viral replicase, and the number of mutant genomes per infected cell follows a simple
Poisson distribution. Such a distribution of mutant genomes was observed for bacteriophages
 $\phi X$174~\cite{Denhardt-1966(52)} and $Q \beta$~\cite{GarciaVillada-2012,Bradwell-2013}.

Combinations of these two modes and intermediate modes of replication, where a fraction of
genomic-sense progeny may be also used as template for replication, have been also observed.
For example, a recent study showed that \emph{Turnip mosaic virus} (TuMV) exhibited such an
intermediate mode of replication, with most replication events following a SMR mode~\cite{Matrinez_et-al-2011}.
Likewise, a distribution of mutants that slightly differed from the Poisson distribution was
also observed for bacteriophage $\phi$6 \cite{Chao2002}.

Recently, a mathematical model of within-cell ssRNA virus replication that accounts for both,
GR and SMR modes, as well as any intermediate replication modes, was suggested~\cite{Matrinez_et-al-2011,Sardanyes-et-al-2012}.
This model provides a reasonably good fit to the observed TuMV data and is equally applicable
to the positive- and negative-sense ssRNA viruses. The model was shown to be globally asymptotically
stable~\cite{Sardanyes-2014} and structurally stable. While this model brought important insights
into the problem, its weakness is that it is related to a comparatively simple deterministic
model with a rather limited ability to reflect the complexity of real-life biological systems.
In such a situation, the use of stochastic models, where the complexity and uncertainty of
real-life systems can be to some extend captured by stochastic perturbations, appears to be
a natural choice.

Stochasticity is an unavoidable factor at the early stages of virus infection.  Since infection starts with one or very few viral particles entering the cell, the likelihood that
each viral component finds its right cellular partners in a molecularly crowed within-cell environment is relatively low.  Consequently, the outcome of infection is highly likely to be affected by the variability in the initial molecular interactions between the virus and host cell resources.  The impact of this inherent stochasticity in ssRNA virus replication has not been properly addressed yet from a theoretical perspective, with the recent paper by Sardany\'{e}s et al.~\cite{Sardanyes-et-al-2018} being a first attempt in this direction.  This study confirmed the important role of noise in the amplification of ssRNAs and found that it may even induced bistability in the system, a theoretical prediction that has not yet been observed in real viral systems.  The objective of this contribution is to explore the model stability with respect to stochastic
perturbations. To address this issue, in this paper we formulate a stochastically perturbed
version of the model and establish its stability applying the direct Lyapunov method.

\section{Deterministic model and its basic properties}

The within-cell amplification dynamics of genomic and antigenomic strands can be described
by the following differential equations~\cite{Matrinez_et-al-2011,Sardanyes-et-al-2012,Fornes-et-al-2019}:
\begin{equation}\label{model}
\aligned
\dot p=&rm\left(1-\frac{p+m}{K}\right)-\delta p,\\
\dot m=&\alpha rp\left(1-\frac{p+m}{K}\right)-\sigma m.
\endaligned
\end{equation}
In this model, $p(t)$  and $m(t)$ are concentrations of genomic ($p$) and antigenomic ($m$)
viral RNAs, respectively. Positive parameters $r$ and $\alpha r$ are the amplification rates
of the viral genomic and antigenomic RNA molecules by the viral replicase. Factor $\alpha \in (0,1]$
allows to represent the replication rate of antigenomic strands as a fraction of the replication
rate of the genomic strands. When $\alpha=1$, both strands replicate at the same rate and thus
the mode of replication is purely GR. As $\alpha \rightarrow 0$, amplification is closer to
the SMR mode. $K>0$ is the cellular carrying capacity that limits the total amount of RNA
molecules that can be produced in a cell, and $\delta$ and $\sigma$  are the genomic degradation
rates of sense and antisense RNA molecules, respectively. The system is defined in triangle
$$
\Omega=\{(p,m) \in R^2: 0 \leq p+m \leq K\},
$$
which is system phase space.

The original model was formulated for a positive-sense ssRNA virus, but it is symmetric, and
hence it is equally applicable to negative-sense ssRNA viruses, and this is why here we use
the more general terminology genomic/antigenomic or sense/antisense.

\begin{lemma}
\label{lem-invariance}
Triangle $ \Omega=\{(p,m) \in R^2: 0 \leq p+m \leq K\} $ is a positive invariant set of system (\ref{model}).
\end{lemma}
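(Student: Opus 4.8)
The plan is to invoke the standard tangency (Nagumo-type) criterion for positive invariance: a closed set is positively invariant for a smooth flow provided that, at every boundary point, the vector field does not point strictly outward, i.e.\ its component along the outward normal is nonpositive. Since $\Omega$ is the triangle with vertices $(0,0)$, $(K,0)$ and $(0,K)$ --- the definition implicitly carrying the biologically natural constraints $p\ge 0$ and $m\ge 0$ --- its boundary splits into three edges, and I would verify the criterion on each in turn.

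First I would treat the edge $p=0$, $0\le m\le K$, whose outward normal is $(-1,0)$. Here $\dot p = rm\bigl(1-\tfrac{m}{K}\bigr)$, which is nonnegative because $m\ge 0$ and $1-m/K\ge 0$ on this segment; hence the outward component $-\dot p$ is nonpositive and no trajectory can cross outward. By the symmetry of the system the edge $m=0$, $0\le p\le K$ is handled identically: there $\dot m = \alpha r p\bigl(1-\tfrac{p}{K}\bigr)\ge 0$, so the analogous conclusion holds.

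The remaining, and conceptually central, edge is the hypotenuse $p+m=K$, where the logistic factor $1-(p+m)/K$ vanishes. Along this edge the amplification terms disappear and the equations reduce to $\dot p=-\delta p$ and $\dot m=-\sigma m$, so that $\frac{d}{dt}(p+m)=-\delta p-\sigma m\le 0$ for $p,m\ge 0$. Since the outward normal to this edge is proportional to $(1,1)$, this shows the flow points inward (or is tangent) there as well.

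The main point to be careful about is that it is these three sign conditions \emph{together} --- rather than any single one in isolation --- that guarantee no trajectory can leave $\Omega$; the vertices, being intersections of edges on which the field is already inward-pointing, require no separate treatment. I do not anticipate a genuine obstacle, only the bookkeeping of checking all three edges and confirming the implicit nonnegativity constraints are respected. With the tangency criterion established on all of $\partial\Omega$, the positive invariance of $\Omega$ follows.
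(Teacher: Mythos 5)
Your proposal is correct and follows essentially the same route as the paper: both arguments verify edge by edge that the vector field does not point outward, with identical computations ($\dot p = rm(1-m/K)\ge 0$ on $p=0$, $\dot m = \alpha rp(1-p/K)\ge 0$ on $m=0$, and $\frac{d}{dt}(p+m)=-\delta p-\sigma m\le 0$ on the hypotenuse). The only difference is presentational: the paper phrases the hypotenuse check via the function $M=p+m$ and calls the edges ``impenetrable,'' whereas you invoke the Nagumo tangency criterion explicitly, which if anything makes the underlying principle more precise.
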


\begin{proof}
Lyapunov function $M=p+m$ satisfies
\begin{eqnarray*}
\frac{dM}{dt} & = & rm\left(1-\frac{p+m}{K}\right)-\delta p+\alpha rp\left(1-\frac{p+m}{K}\right)-\sigma m.
\end{eqnarray*}
It is easy to see that $\frac{dM}{dt}<0$ holds for all $p+m\geq K$, and hence segment $p+m=K$
of the boundary $\overline{\Omega}$ is impenetrable from $\Omega$.

Furthermore, at $p=0$,
\[
\dot{p}=rm\left(1-\frac{m}{K}\right)\geq0
\]
holds for all $m\leq K$, and hence segment $p=0$ of the boundary is impenetrable from $\Omega$
as well. Likewise,
\[
\dot{m}=\alpha rp\left(1-\frac{p}{K}\right)\geq0
\]
holds at $m=0$ for all $p\leq K$, and hence segment $m=0$ of the boundary is impenetrable
from $\Omega$. This completes the proof.
\end{proof}

The positive invariance means that any trajectory initiated in triangle $\Omega$ remains there
indefinitely.

\subsection{Equilibrium states}

It is easy to see that the origin $E_0=(0,0)$ is an equilibrium state of the model. Apart
from the origin, the model can have a positive equilibrium state $E_+=(p^*,m^*)$, where both
sub-populations coexist.

Indeed, equilibrium states of this system satisfy equalities
\begin{equation}\label{equilibria}
\aligned
rm(1-b(p+m))=&\;\delta p, \\
\alpha rp(1-b(p+m))=&\;\sigma m,
\endaligned
\end{equation}
(where $b=1/K$), and, hence, $\sigma m^2=\alpha\delta p^2$ holds at an equilibrium state.
Let us suppose that equilibrium levels $p^*$ and $m^*$ are nonzero and are of the same signs
(are positive, as we are interested in positive $p$ and $m$ only). Then
\begin{equation}\label{m=AAAp}
m=\sqrt{\frac{\alpha\delta}{\sigma}}p.
\end{equation}
Denoting
\begin{equation}\label{R0}
R_0=r\sqrt{\frac{\alpha}{\delta\sigma}}
\end{equation}
and substituting (\ref{m=AAAp}) into (\ref{equilibria}), we obtain equality
$$
R_0\left(1-bp\left(1+\frac{\delta}{r}R_0\right)\right)=1.
$$
This immediately yields equilibrium state $E_+=(p^*,m^*)$, where
\begin{equation}\label{p*,m*}
p^*={1-R_0^{-1}\over b\Big(1+{\delta\over r}R_0\Big)},\qquad
m^*={{\delta\over r}(R_0-1) \over b\Big(1+{\delta\over r}R_0\Big)}.
\end{equation}
Please note that
$$
p^* + m^* = \frac{R_0^{-1}+{\delta\over r}}{b\Big(1+{\delta\over r}R_0\Big)}(R_0-1)
          = K \frac{R_0-1}{R_0} < K .
$$

The value $R_0$ has an obvious biological interpretation: it is the basic RNA reproduction
number, that is an average number of viral RNA produced by a single RNA for its entire lifespan
under the most favorable conditions, that is when $p+m\ll K$ holds. It is obvious, therefore, that
$R_0>1$ is essential for persistence of virus within a cell: it is easy to see in (\ref{p*,m*})
that equilibrium state $E_+$ is located in the positive quadrant of the real plane if and only
if $R_0>1$ and that for all $R_0<1$ values $p^*$ and $m^*$ are negative.
At $R_0=1$ equilibria $E_0$ and $E_+$ merge at the origin; that is, a saddle-node bifurcation
occurs at $R_0=1$.

When $R_0<1$, the origin $E_0=(0,0)$ is a stable node. For all $R_0>1$ point $E_0$ is
a saddle point, whereas $E_+$ is a stable node. Moreover, it can be proven that
for $R_{0}\leq 1$ equilibrium state $E_{0}$ is globally asymptotically stable.

\begin{theorem}
If $R_{0}\leq1$, then equilibrium state $E_{0}$ is globally asymptotically stable in $\Omega$.
\end{theorem}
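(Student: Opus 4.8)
The plan is to apply the direct Lyapunov method on the compact, positively invariant set $\Omega$ furnished by Lemma~\ref{lem-invariance}, using a simple \emph{linear} Lyapunov function. I would try $V(p,m)=ap+bm$ with weights $a,b>0$ still to be chosen; on $\Omega$ this function is continuous, nonnegative, and vanishes only at $E_0$, so it is positive definite with respect to the origin. The whole proof then reduces to choosing $a,b$ so that $\dot V<0$ away from $E_0$.

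The next step is to compute the derivative of $V$ along the trajectories of~(\ref{model}) and to expose its sign. Writing the logistic factor as $1-\frac{p+m}{K}$ and expanding, I obtain
$$\dot V = p\bigl(\alpha r b-\delta a\bigr)+m\bigl(ra-\sigma b\bigr)-\frac{r\,(am+\alpha b p)(p+m)}{K}.$$
The decisive observation is that the last term is manifestly nonpositive on $\Omega$, and in fact strictly negative whenever $(p,m)\neq(0,0)$ with $p,m\ge 0$, because $a,b,\alpha,r>0$ force $am+\alpha bp>0$ and $p+m>0$ there. Hence it suffices to choose the weights so that the two linear coefficients are nonpositive, i.e. $\alpha r b\le \delta a$ and $ra\le\sigma b$, which amounts to requiring $\frac{\alpha r}{\delta}\le\frac{a}{b}\le\frac{\sigma}{r}$.

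The key point tying this condition to the hypothesis is that the admissible interval for $a/b$ is nonempty precisely when $\frac{\alpha r}{\delta}\le\frac{\sigma}{r}$; clearing denominators, this is equivalent to $r^2\frac{\alpha}{\delta\sigma}\le 1$, that is, to $R_0\le 1$ by the definition~(\ref{R0}). Thus, under the assumption $R_0\le1$ one can fix admissible weights $a,b>0$, and for such a choice
$$\dot V\le -\frac{r\,(am+\alpha bp)(p+m)}{K}<0\qquad\text{for all }(p,m)\in\Omega\setminus\{E_0\}.$$
Since $V$ is positive definite and $\dot V$ is negative definite on the compact invariant region $\Omega$, standard Lyapunov theory implies that every trajectory starting in $\Omega$ converges to $E_0$, which is exactly global asymptotic stability in $\Omega$.

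I expect the only genuine subtlety to be verifying strict negativity of $\dot V$ uniformly up to the boundary case $R_0=1$. For $R_0<1$ the interval has nonempty interior, both linear coefficients can be made strictly negative, and the conclusion is immediate; at $R_0=1$ the interval degenerates to the single ratio $\frac{a}{b}=\frac{\sigma}{r}=\frac{\alpha r}{\delta}$, both linear coefficients vanish, and strict negativity of $\dot V$ away from the origin then rests \emph{entirely} on the nonlinear term. This is precisely why it is advantageous to rearrange $\dot V$ so that this term appears as an isolated, sign-definite summand, rather than relying on a cruder estimate that merely bounds $1-\frac{p+m}{K}$ by $1$ and would leave the degenerate case $R_0=1$ unresolved.
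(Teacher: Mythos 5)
Your proof is correct and takes essentially the same approach as the paper: the paper's Lyapunov function $W=\sigma p+rm$ is exactly your $V=ap+bm$ with the admissible ratio $a/b=\sigma/r$ (the right endpoint of your interval), and the paper likewise obtains strict negativity of the derivative away from the origin from the quadratic term, so the boundary case $R_0=1$ is handled identically. Your only addition is keeping the weights free and noting that admissible weights exist precisely when $R_0\le 1$, which nicely explains the threshold but does not change the argument.
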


\begin{proof}
Let us consider Lyapunov function
\[
W=\sigma p+ rm.
\]
The Lyapunov function satisfies
\begin{eqnarray*}
\frac{dW}{dt} & = & \sigma rm\left(1-\frac{p+m}{K}\right)-\sigma \delta p
  + \alpha p r^2 \left(1-\frac{p+m}{K}\right)-r\sigma m\\
 & = & -(r\sigma-r\sigma)m-(\sigma \delta-\alpha r^2)p-(\sigma r m+\alpha r^2p)\frac{p+m}{K}\\
 & = & -\sigma \delta (1-R_{0}^{2})p -r\sigma \left( m+ \frac{\delta}{r} R_{0}^{2} p\right)\frac{p+m}{K}.
\end{eqnarray*}
That is, $R_{0}\leq1$ is sufficient to ensure that $\frac{dW}{dt}<0$ for all $(p,m)\in\Omega$
apart from $E_{0}=(0,0)$ (where $\frac{dW}{dt}=0$).  By Lyapunov asymptotic stability theorem,
and recalling that, by Lemma~\ref{lem-invariance}, $\Omega $ in a positive invariant set of
system (\ref{model}), equilibrium state $E_{0}$ is globally asymptotically stable (in $\Omega$).
\end{proof}

The global asymptotic stability of equilibrium state $E_{+}$ for $R_{0}>1$ was proven by J.
Sardany\'{e}s et al.~\cite{Sardanyes-2014} using Dulac's criterion. Indeed, let us consider
the divergence $\nabla f$ of vector field $f=(\dot{p},\dot{m})$ defined by equations~(\ref{model}).
It is easy to see that
$$
\nabla f= \frac{\partial \dot{p}}{\partial p}+\frac{\partial \dot{m}}{\partial m}
= - \frac{r}{K}(m + \alpha p)-\delta -\sigma.
$$
That is, $\nabla f<0$ in $\Omega$ (in fact, the inequality holds for all $p,m\geq 0$), and hence,
by Dulac's criterion, there is no limit cycles in $\Omega$.  Taking into consideration the
positive invariance of set $\Omega$, that points $E_{0}$ and $E_{+}$ are the only equilibrium
states of system (\ref{model}) in $\Omega$, and that for all $R_{0}>1$ $E_{0}$ is unstable
whereas $E_{+}$ is locally stable, by the Andronov theorem~\cite{Andronov}, the
equilibrium state $E_{+}$, when it exists in $\Omega$ (that is for all $R_{0}>1$), is globally
asymptotically stable.

Please note that the global asymptotic stability in $\Omega$ of equilibrium state $E_{0}$
for $R_{0}\leq 1$ can be proven by the same arguments.

\subsection{Equilibria with negative coordinates}

The case when equilibrium levels $p$ and $m$ are nonzero of different signs is not of practical
relevance for the model. However, these can help to better understand model global dynamics.
If $p$ and $m$ are of different signs, then
$$
m=-\sqrt{\alpha\delta\over\sigma}p.
$$
Substituting this equality into (\ref{equilibria}) we have
$$
-R_0\left(1-bp\left(1-\delta R_0 /r \right)\right)=1.
$$
This yields another equilibrium state $E_{(+-)}=(p_{(+-)},$ $m_{(+-)})$ with
$$
p_{(+-)}={1+R_0^{-1} \over b\Big(1-{\delta\over r}R_0\Big)},\quad
m_{(+-)}=-{\delta\over r}{R_0+1 \over b\Big(1-{\delta\over r}R_0\Big)}.
$$
It is easy to see that $ p_{(+-)}>0$ and $ m_{(+-)}<0 $ when  $ R_0 < r/ \delta$, or $ p_{(+-)}<0$
and $m_{(+-)}>0 $ when  $R_0 > r/ \delta $. At $ R_0 = r/ \delta $ both, $p_{(+-)}$ and $m_{(+-)}$,
take infinite values.

\section{Stochastic perturbations, centralization and linearization}

Let us assume that the considered system (\ref{model}) is exposed to stochastic perturbations
that are of the type of white noise and are proportional to a deviation of the system state
$(p,m)$ from the equilibrium $E_+=(p^*,m^*)$. In this case we obtain the system of It\^o's stochastic
differential equations
\begin{equation}
\aligned
\dot p=&\;rm\left(1-b(p+m)\right)-\delta p+\omega_1(p-p^*)\dot w_1,\\
\dot m=&\;\alpha rp\left(1-b(p+m)\right)-\sigma m+\omega_2(m-m^*)\dot w_2,
\endaligned
\label{stochastic-model}
\end{equation}
where $\omega_1$, $\omega_2$ are constants and $w_1(t)$, $w_2(t)$ are mutually independent
standard Wiener processes \cite{Gikhman,Shaikhet2}. Please note that for the proposed type
of stochastic perturbations the equilibrium state $E_+=(p^*,m^*)$
of the deterministic model (\ref{model}) is also a solution to the system of stochastic
differential equations (\ref{stochastic-model}).  This type of stochastic
perturbation was firstly introduced in \cite{Beretta} and thereafter was extensively applied
to a variety of mathematical models (see \cite{Shaikhet2,Shaikhet1,Shaikhet-Kor-2016}
and references therein).

To centralize system (\ref{stochastic-model}) around equilibrium state $E_+=(p^*,m^*)$,
we substitute $p=x_1+p^*$ and $m=x_2+m^*$ into (\ref{stochastic-model}).  Then, using (\ref{equilibria}),
%we have
%$$
%\aligned
%\dot x_1=&r(x_2+m^*)(1-b(x_1+x_2+p^*+m^*))\\
%&-\delta(x_1+p^*))+\omega_1x_1\dot w_1\\
%=&-[brm^*+\delta]x_1+r[1-b(p^*+2m^*)]x_2\\
%&-br(x_1+x_2)x_2+\omega_1x_1\dot w_1
%\endaligned
%$$
%and
%$$
%\aligned
%\dot x_2=&\alpha r(x_1+p^*)(1-b(x_1+x_2+p^*+m^*))\\
%&-\sigma(x_2+m^*)+\omega_2x_2\dot w_2\\
%=&\alpha r(x_1+p^*)(1-b(x_1+x_2+p^*+m^*))\\
%&-\sigma x_2-\alpha rp^*(1-b(p^*+m^*))+\omega_2x_2\dot w_2\\
%=&\alpha rx_1(1-b(x_1+x_2+p^*+m^*))\\
%&-\alpha brp^*(x_1+x_2)-\sigma x_2+\omega_2x_2\dot w_2\\
%=&\alpha r(1-b(p^*+m^*))x_1-\alpha rb(x_1+x_2)x_1\\
%&-\alpha brp^*(x_1+x_2)-\sigma x_2+\omega_2x_2\dot w_2\\
%=&\alpha r(1-b(2p^*+m^*))x_1-(\alpha brp^*+\sigma)x_2\\
%&-\alpha br(x_1+x_2)x_1+\omega_2x_2\dot w_2.
%\endaligned
%$$
%Thus,
we obtain the following system of nonlinear stochastic differential equations:
\begin{equation}\label{nonlin}
\aligned
\dot x_1(t)=&a_{11}x_1(t)+a_{12}x_2(t)\\
&-br(x_1+x_2)x_2+\omega_1x_1(t)\dot w_1(t),\\
\dot x_2(t)=&a_{21}x_1(t)+a_{22}x_2(t)\\
&-\alpha br(x_1+x_2)x_1+\omega_2x_2(t)\dot w_2(t),
\endaligned
\end{equation}
where
\begin{equation}\label{matrix_A}
\gathered
a_{11}=-[brm^*+\delta],\quad a_{12}=r[1-b(p^*+2m^*)],\\
a_{21}=\alpha r[1-b(2p^*+m^*)], \quad
a_{22}=-\left[\alpha brp^*+\sigma\right].
\endgathered
\end{equation}
The equilibrium state $E_+=(p^*,m^*)$ of system \eqref{stochastic-model} is stable if and
only if the zero solution of system (\ref{nonlin}) is stable.
(Please note that, for the sake of simplicity, here and below stochastic differential
equations are written in the form of derivatives, i.e. $\dot x = a+b \dot w$,
understanding by this the differential form $dx=adt+bdw$.)

\section{Stability of the stochastically perturbed model}

\subsection{Definitions and auxiliary statements}

\begin{definition} The zero solution of  system (\ref{linear-model}) is called mean square
stable, if for each $\varepsilon>0$ there exists $\delta>0$ such that
$\bold E|y(t,y_0)|^2<\varepsilon$ $(y=(y_1,y_2))$
holds for all $t\ge0$, provided that $\bold E|y_0|^2<\delta$. The solution is asymptotically
mean square stable, if it is mean square stable, and, for any initial value $y_0$,
$\lim_{t\to\infty}\bold E|y(t,y_0)|^2=0$.
\end{definition}

\begin{definition} The zero solution of  system (\ref{nonlin}) is called stable in probability,
if, for any $\varepsilon_1>0$ and $\varepsilon_2>0$, there exists $\delta>0$ such that, for
any initial value $x_0$, solution $x(t,x_0)$ $(x=(x_1,x_2))$ to equation (\ref{nonlin}) satisfies
condition $\bold P\{\sup_{t\ge0}|x(t,x_0)|>\varepsilon_1\}<\varepsilon_2$, where $\bold P\{|x_0|<\delta\}=1$.
\end{definition}

%\section{Appendix}
%\textbf{Leonid, this Appendix is a copy from the previous paper. It would be good to make a few changes}
%\bigskip

\begin{definition}
With the It\^o stochastic differential equation (see \cite{Gikhman})
\begin{equation}
\gathered
dx(t)=a_1(t,x(t))dt+a_2(t,x(t))dw(t),\\
t\ge0, \qquad  x(t)\in\bold R^n, \qquad x(0)=x_0.
\endgathered
\label{A.1}
\end{equation}
is associated the generator
\begin{equation}
LV(t,x)=V_t(t,x)+\nabla V'(t,x)a_1(t,x)+\frac{1}{2}Tr[a'_2(t,x)\nabla^2V(t,x)a_2(t,x)],
\label{A.2}
\end{equation}
where
$$
\gathered
V_t={\partial u(t,x)\over\partial t},\qquad
\nabla V=\left({\partial V(t,x)\over\partial x_1},...,{\partial V(t,x)\over\partial x_n}\right),\\
\nabla^2V=\left({\partial^2 V(t,x)\over\partial x_i\partial x_j}\right),\quad i,j=1,...,n.
\endgathered
$$
\end{definition}

\begin{remark}\label{R4.1} The order of nonlinearity of the nonlinear system of
stochastic differential equations (\ref{nonlin}) is higher than one. For such a system,
sufficient conditions for the asymptotic mean square stability of the zero solution of
the linear part of system (\ref{nonlin}), that is, in this case, of linear system
\begin{equation}\label{linear-model}
\gathered
\dot y_1(t)=a_{11}y_1(t)+a_{12}y_2(t)+\omega_1y_1(t)\dot w_1(t),\\
\dot y_2(t)=a_{21}y_1(t)+a_{22}y_2(t)+\omega_2y_2(t)\dot w_2(t),
\endgathered
\end{equation}
are, at the same time, sufficient conditions for the stability in probability of the zero
solution of nonlinear system (\ref{nonlin}) (see~\cite{Shaikhet2}, p.~130).
\end{remark}

%%%%%%%%%%%%%%%%%%%%%%%%%%%%%%%%%

Let us denote
\begin{equation}\label{td}
\gathered
A=(a_{ij}),\quad \gamma_i=\frac{1}{2}\omega_i^2,\quad i,j=1,2,\\
Tr(A)=a_{11}+a_{22},\quad
\det(A)=a_{11}a_{22}-a_{12}a_{21},\\
A_1=\det(A)+a_{11}^2,\quad A_2=\det(A)+a_{22}^2,\\
y(t)=\begin{bmatrix} y_1(t) \\ y_2(t) \end{bmatrix}, \quad
S_1=\begin{bmatrix} \omega_1 & 0 \\ 0 & 0 \end{bmatrix}, \quad
 S_2=\begin{bmatrix} 0 & 0 \\ 0 & \omega_2 \end{bmatrix},
\quad Q=\begin{bmatrix} q & 0 \\ 0 & 1 \end{bmatrix},\quad q>0,
\endgathered
\end{equation}
and represent the system (\ref{linear-model}) in the matrix form
\begin{equation}\label{limat}
\dot y(t)=Ay(t)+\sum^2_{i=1}S_iy(t)\dot w_i(t).
\end{equation}

\begin{lemma}
\label{L4.1}
Suppose that $Tr(A)<0$, $\det(A)>0$ and
\begin{equation}\label{trd}
\gamma_1<\frac{|Tr(A)|\det(A)}{A_2},\qquad
\gamma_2<\frac{|Tr(A)|\det(A)-A_2\gamma_{1}}{A_1-|Tr(A)|\gamma_{1}}.
\end{equation}
Then the zero solution of system (\ref{linear-model}) is asymptotically mean square stable.
\end{lemma}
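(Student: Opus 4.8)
The plan is to produce an explicit quadratic Lyapunov function and reduce the claim to the existence of a positive definite matrix solving an associated Lyapunov-type matrix equation. Following the standard scheme for linear It\^o systems, I would take $V(y)=y'Py$ with $P=\begin{bmatrix}p_{11}&p_{12}\\ p_{12}&p_{22}\end{bmatrix}$ symmetric and, for the moment, undetermined, and apply the generator (\ref{A.2}) to the system written in the matrix form (\ref{limat}). Since here $a_1(y)=Ay$ and the diffusion is $\sum_i S_iy\,dw_i$, the drift term contributes $y'(A'P+PA)y$ and the diffusion term contributes $\sum_{i=1}^2 (S_iy)'P(S_iy)=y'\big(\sum_i S_i'PS_i\big)y$. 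Because $S_1,S_2$ are the diagonal matrices of (\ref{td}), one checks immediately that $\sum_i S_i'PS_i=\mathrm{diag}(\omega_1^2 p_{11},\,\omega_2^2 p_{22})=\mathrm{diag}(2\gamma_1 p_{11},\,2\gamma_2 p_{22})$, so that
\[
LV(y)=y'\Big(A'P+PA+\mathrm{diag}(2\gamma_1 p_{11},2\gamma_2 p_{22})\Big)y.
\]
By the stochastic Lyapunov theorem it then suffices to exhibit a positive definite $P$ for which the bracketed matrix is negative definite; I would force this by imposing the matrix equation $A'P+PA+\mathrm{diag}(2\gamma_1 p_{11},2\gamma_2 p_{22})=-Q$ with $Q=\mathrm{diag}(q,1)$ as in (\ref{td}).

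Next I would solve this equation. Reading off its three independent scalar entries gives the linear system
\[
\begin{aligned}
&2(a_{11}+\gamma_1)p_{11}+2a_{21}p_{12}=-q,\\
&a_{12}p_{11}+Tr(A)\,p_{12}+a_{21}p_{22}=0,\\
&2a_{12}p_{12}+2(a_{22}+\gamma_2)p_{22}=-1,
\end{aligned}
\]
in the unknowns $p_{11},p_{12},p_{22}$. Using $Tr(A)<0$ to eliminate $p_{12}=-(a_{12}p_{11}+a_{21}p_{22})/Tr(A)$ and back-substituting yields $p_{11},p_{22},p_{12}$ as explicit rational expressions in the $a_{ij}$, the $\gamma_i$ and $q$. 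The remaining task is to verify, via Sylvester's criterion, that the resulting $P$ is positive definite, i.e. $p_{11}>0$ and $\det P=p_{11}p_{22}-p_{12}^2>0$; the auxiliary parameter $q>0$ supplies a harmless degree of freedom that can be fixed conveniently, and I expect the two inequalities to collapse, after using $Tr(A)<0$, $\det(A)>0$ and the abbreviations $A_1=\det(A)+a_{11}^2$, $A_2=\det(A)+a_{22}^2$, precisely onto the two bounds (\ref{trd}) on $\gamma_1$ and $\gamma_2$.

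The main obstacle will be this last positivity bookkeeping rather than any conceptual difficulty. One has to track how the combinations $|Tr(A)|\det(A)$, $A_1$ and $A_2$ emerge from the determinant $p_{11}p_{22}-p_{12}^2$, and in particular to confirm that the second bound in (\ref{trd}) is meaningful, i.e. that its denominator $A_1-|Tr(A)|\gamma_1$ is positive. This is where a small algebraic identity helps: since $A_1A_2-(Tr(A))^2\det(A)=(a_{12}a_{21})^2\ge 0$, the first inequality $\gamma_1<|Tr(A)|\det(A)/A_2$ already forces $\gamma_1<A_1/|Tr(A)|$, hence $A_1-|Tr(A)|\gamma_1>0$, so the bound on $\gamma_2$ is well posed and the two conditions are mutually consistent. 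With $P>0$ established, $LV=-y'Qy<0$ for $y\neq 0$, and the Lyapunov asymptotic stability theorem for stochastic differential equations gives the asymptotic mean square stability of the zero solution of (\ref{linear-model}), as claimed.
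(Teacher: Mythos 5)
Your proposal is sound and does end up proving the lemma, but it reorganizes the computation relative to the paper. The paper's proof solves the \emph{deterministic} Lyapunov equation $PA+A'P=-Q$, $Q=\mathrm{diag}(q,1)$, so that the entries (\ref{pii}) of $P$ do not involve the noise at all; the generator then evaluates to $Lv=(-q+2p_{11}\gamma_1)y_1^2+(-1+2p_{22}\gamma_2)y_2^2$, and conditions (\ref{trd}) emerge as exactly the condition for the existence of a $q>0$ satisfying the two scalar inequalities (the nonempty-interval condition (\ref{q0}) is reduced to (\ref{trd}) via the identity (\ref{A12})). You instead absorb the diffusion contribution into the matrix equation, $A'P+PA+\mathrm{diag}(2\gamma_1p_{11},2\gamma_2p_{22})=-Q$, so that $Lv=-y'Qy$ is negative definite by construction, and all conditions must come out of the existence and positive definiteness of a noise-dependent $P$. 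That route does go through: eliminating $p_{12}$ as you propose leaves the system $(A_1-|Tr(A)|\gamma_1)p_{11}-a_{21}^2p_{22}=\tfrac{1}{2}q|Tr(A)|$ and $-a_{12}^2p_{11}+(A_2-|Tr(A)|\gamma_2)p_{22}=\tfrac{1}{2}|Tr(A)|$, whose determinant, after applying the same identity (\ref{A12}), equals $|Tr(A)|\bigl[|Tr(A)|\det(A)-A_2\gamma_1-(A_1-|Tr(A)|\gamma_1)\gamma_2\bigr]$; conditions (\ref{trd}) are precisely what make this bracket positive, and they also force $A_1-|Tr(A)|\gamma_1>0$ and $A_2-|Tr(A)|\gamma_2>0$, so Cramer's rule yields $p_{11}>0$ and $p_{22}>0$. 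Thus both proofs use the same Lyapunov function, the same generator computation and the same key identity; the paper's ordering keeps every inequality scalar and gets positive definiteness of $P$ for free from standard theory, while yours buys a prescribed diagonal $Lv$ (and incidentally avoids the division by $a_{12}^2\gamma_2$ implicit in (\ref{q})) at the price of a $P$ depending on $\gamma_1,\gamma_2$.

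The one step you should not dismiss as bookkeeping is $\det P=p_{11}p_{22}-p_{12}^2>0$: it is \emph{not} a direct restatement of (\ref{trd}), and expanding it by hand is genuinely messy. The clean way to finish is to bypass Sylvester's criterion entirely: once $p_{11}>0$ and $p_{22}>0$ are established, your matrix equation says that $P$ solves the ordinary Lyapunov equation $PA+A'P=-\widetilde Q$ with $\widetilde Q=Q+\mathrm{diag}(2\gamma_1p_{11},2\gamma_2p_{22})$ positive definite; since $Tr(A)<0$ and $\det(A)>0$ make $A$ Hurwitz, the unique solution is $P=\int_0^\infty e^{A't}\,\widetilde Q\,e^{At}\,dt$, which is positive definite. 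With that replacement your argument is complete and recovers the lemma under exactly the hypotheses (\ref{trd}).
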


\begin{proof}
(See \cite{Shaikhet2})
Conditions $Tr(A)<0, \det(A)>0$ ensure that matrix equation $PA+A'P=-Q$ has
a positive definite solution $P=\|p_{ij}\|$.
For matrix $A$ the elements of matrix $P$ are
\begin{equation}\label{pii}
p_{11}=\frac{A_2q+a^2_{21}}{2|Tr(A)|\det(A)},\quad
p_{22}=\frac{A_1+a^2_{12}q}{2|Tr(A)|\det(A)},\quad
p_{12}=\frac{a_{12}a_{22}q+a_{21}a_{11}}{2|Tr(A)|\det(A)}.
\end{equation}
Let $L$ denote the generator of equation \eqref{limat}.
Lyapunov function $ v(y)=y'Py $ satisfies
\begin{equation}\label{Lv}
\aligned
Lv(y(t))=&y'(t)\left(PA+A'P+\sum^2_{i=1}S'_iPS_i\right)y(t)\\
=&(-q+2p_{11}\gamma_1)y^2_1(t)+(-1+2p_{22}\gamma_2)y^2_2(t).
\endaligned
\end{equation}
%where
%\begin{equation}\label{pii}
%p_{11}=\frac{A_2q+a^2_{21}}{2|Tr(A)|\det(A)},\quad p_{22}=\frac{A_1+a^2_{12}q}{2|Tr(A)|\det(A)}.
%\end{equation}
%Please note that $p_{11}$ and $p_{22}$ are positive, if $q$ is positive.
That is, if here exists $q>0$ such that
\begin{equation}\label{ineq}
-q+2p_{11}\gamma_1<0, \quad -1+2p_{22}\gamma_2<0
\end{equation}
hold, then $Lv(y(t))$ is negative definite, whereas $v(y)$ is positive definite, and,
therefore, the zero solution of equation \eqref{limat} is asymptotically mean square stable.

Substituting \eqref{pii} into \eqref{ineq}, we obtain
\begin{equation*}
\frac{(A_2q+a^2_{21})\gamma_1}{|Tr(A)|\det(A)}<q, \quad  \frac{(A_1+a^2_{12}q)\gamma_2}{|Tr(A)|\det(A)}<1.
\end{equation*}
From these inequalities we have
\begin{equation}\label{q}
\frac{a^2_{21}\gamma_1}{|Tr(A)|\det(A)-A_2\gamma_1}<q<\frac{|Tr(A)|\det(A)-A_1\gamma_2}{a^2_{12}\gamma_2}.
\end{equation}
That is, if
\begin{equation}\label{q0}
\frac{a^2_{21}\gamma_1}{|Tr(A)|\det(A)-A_2\gamma_1}<\frac{|Tr(A)|\det(A)-A_1\gamma_2}{a^2_{12}\gamma_2}
\end{equation}
holds, then there exists $q>0$ such that \eqref{q}, and, therefore, \eqref{ineq} holds.

Inequality \eqref{q0} holds by Lemma's hypotheses.
Indeed, by the first of the inequalities \eqref{trd}, we can rewrite inequality \eqref{q0} as
\begin{equation}\label{q1}
a^2_{12}a^2_{21}\gamma_1\gamma_2<(|Tr(A)|\det(A))^2-|Tr(A)|\det(A)(A_1\gamma_2+A_2\gamma_1)+A_1A_2\gamma_1\gamma_2.
\end{equation}
Immediately, from the definitions of $A_1$ and $A_2$, we have
\begin{equation}\label{A12}
A_1A_2=|Tr(A)|^2\det(A)+a^2_{12}a^2_{21}. % \ge|Tr(A)|^2\det(A).
\end{equation}
Hence, it suffices to show that
\begin{equation}\label{g2q}
0<|Tr(A)|\det(A)-A_2\gamma_1-(A_1-|Tr(A)|\gamma_1)\gamma_2
\end{equation}
holds. From \eqref{A12},
$$
A_1A_2=|Tr(A)|^2\det(A)+a^2_{12}a^2_{21}\ge|Tr(A)|^2\det(A),
$$
and hence % \eqref{trd} it follows that
\begin{equation}\label{g2}
\gamma_1<\frac{|Tr(A)|\det(A)}{A_2}\le\frac{A_1}{|Tr(A)|}.
\end{equation}
Therefore, inequality \eqref{g2q} is equivalent to the second equation in \eqref{trd}.
The proof is completed.
\end{proof}

%%%%%%%%%%%%%%%%%%%%%%%%%%%%%%%%%%%
%Let us denote
%\begin{equation}\label{td}
%\gathered
%A=\|a_{ij}\|,\quad \gamma_i=\frac{1}{2}\omega_i^2,\quad i,j=1,2,\\
%Tr(A)=a_{11}+a_{22},\quad
%\det(A)=a_{11}a_{22}-a_{12}a_{21},\\
%A_1=\det(A)+a_{11}^2,\quad A_2=\det(A)+a_{22}^2.
%\endgathered
%\end{equation}
%
%\begin{lemma}
%\label{L4.1}
%(See \cite{Shaikhet2}) Suppose that $Tr(A)<0$, $\det(A)>0$ and
%\begin{equation}\label{trd}
%\gamma_1<\frac{|Tr(A)|\det(A)}{A_2},\qquad
%\gamma_2<\frac{|Tr(A)|\det(A)-A_2\gamma_{1}}{A_1-|Tr(A)|\gamma_{1}}.
%\end{equation}
%Then the zero solution of system (\ref{linear-model}) is asymptotically mean square stable.
%\end{lemma}

\begin{remark} Note that \eqref{g2} ensures that
%(see \cite{Shaikhet2}) that
%$$
%\frac{|Tr(A)|\det(A)}{A_2}<\frac{A_1}{|Tr(A)|},
%$$
%and hence
the right-hand part of the second inequality in (\ref{trd}) is positive.
\end{remark}

\subsection{Stability of equilibrium $E_+$}

\begin{theorem} Suppose that $R_0>1$ and that conditions (\ref{trd}) hold.  Then equilibrium
state $E_{+}$ of the stochastically perturbed system (\ref{stochastic-model}) is stable in
probability.
\end{theorem}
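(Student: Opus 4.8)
The plan is to assemble the machinery already developed: reduce the claim to the asymptotic mean square stability of the linear system \eqref{linear-model} established in Lemma~\ref{L4.1}, and then transfer this to stability in probability via Remark~\ref{R4.1}. The starting observation is that the matrix $A=(a_{ij})$ defined in \eqref{matrix_A} is precisely the Jacobian of the deterministic vector field \eqref{model} evaluated at $E_+$, since the stochastic terms $\omega_i x_i \dot w_i$ in \eqref{nonlin} vanish at the origin and contribute nothing to the linear deterministic part. Consequently the hypotheses $Tr(A)<0$ and $\det(A)>0$ required by Lemma~\ref{L4.1} coincide with the Routh--Hurwitz conditions for local asymptotic stability of $E_+$.

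First I would verify these two sign conditions under the assumption $R_0>1$. The trace is immediate: since $p^*,m^*>0$ for $R_0>1$ and all parameters are positive, $Tr(A)=a_{11}+a_{22}=-(brm^*+\delta)-(\alpha br p^*+\sigma)<0$. For the determinant I would exploit the equilibrium identities \eqref{equilibria}, which give $1-b(p^*+m^*)=R_0^{-1}$ together with $rm^*=R_0\delta p^*$ and $\alpha rp^*=R_0\sigma m^*$; these let me rewrite the four entries of $A$ in terms of $bp^*$, $bm^*$ and $R_0$. Substituting $\alpha r^2=R_0^2\delta\sigma$ from \eqref{R0} and expanding, I expect the cross terms to cancel and the determinant to collapse to the clean closed form $\det(A)=2\delta\sigma(R_0-1)$, which is strictly positive exactly when $R_0>1$. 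Alternatively this is free of charge: the deterministic analysis recorded earlier already shows that $E_+$ is a stable node for $R_0>1$, which is equivalent to $Tr(A)<0$ and $\det(A)>0$.

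With $Tr(A)<0$ and $\det(A)>0$ in hand, the hypotheses of Lemma~\ref{L4.1} are met, since conditions \eqref{trd} are assumed in the theorem. Lemma~\ref{L4.1} then yields that the zero solution of the linear system \eqref{linear-model} is asymptotically mean square stable. Because the nonlinear system \eqref{nonlin} has order of nonlinearity higher than one, Remark~\ref{R4.1} upgrades this to stability in probability of the zero solution of \eqref{nonlin}. Finally, by the equivalence noted after \eqref{matrix_A}, stability in probability of the zero solution of the centralized system \eqref{nonlin} is the same as stability in probability of $E_+$ for \eqref{stochastic-model}, which is the assertion.

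The only genuine computation is the determinant evaluation, and I expect it to be the main (though modest) obstacle: a naive expansion of $a_{11}a_{22}-a_{12}a_{21}$ looks messy, and the cancellation down to $2\delta\sigma(R_0-1)$ only becomes transparent after the substitutions $1-b(p^*+m^*)=R_0^{-1}$ and $\alpha r^2=R_0^2\delta\sigma$ are carried out. Everything else is bookkeeping: identifying $A$ with the Jacobian, citing Lemma~\ref{L4.1} and Remark~\ref{R4.1}, and invoking the equivalence of the two stability statements.
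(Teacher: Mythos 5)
Your proposal is correct and follows essentially the same route as the paper: reduce the claim to Lemma~\ref{L4.1} via Remark~\ref{R4.1}, observe that $Tr(A)<0$ is immediate, and verify $\det(A)=2\delta\sigma(R_0-1)>0$ for $R_0>1$. The only difference is in bookkeeping: you evaluate the determinant using the equilibrium identities $1-b(p^*+m^*)=R_0^{-1}$, $rm^*=R_0\delta p^*$, $\alpha rp^*=R_0\sigma m^*$ together with $\alpha r^2=R_0^2\delta\sigma$ (which does collapse the expression to $2\delta\sigma(R_0-1)$ exactly as you anticipate), whereas the paper substitutes the explicit formulas (\ref{p*,m*}) for $p^*$ and $m^*$ and simplifies; both computations are routine and yield the same conclusion.
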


\begin{proof}
By Remark \ref{R4.1} and Lemma \ref{L4.1}, it is sufficient to show that $Tr(A)<0$ and $\det(A)>0$
hold for equilibrium state $E_{+}$. For system (\ref{linear-model}),
$$
Tr(A)=-(br(m^*+\alpha p^*)+\delta +\sigma ),
$$
and hence $Tr(A)<0$ for all positive (and hence all practically relevant) values of system (\ref{linear-model})
parameters.  Furthermore,
\begin{equation}\label{da1}
\aligned
\det(A)&=\;\delta\sigma+br(\alpha\delta p^*+\sigma m^*)\\
&-\alpha r^2[1+2b^2(p^*+m^*)^2-3b(p^*+m^*)].
\endaligned
\end{equation}
Here, by (\ref{p*,m*}),
\begin{equation}\label{da2}
\aligned
b(\alpha\delta p^*+\sigma m^*)= &\frac{\alpha\delta\left(1-R_0^{-1}\right)+\frac{\sigma\delta}{r}(R_0-1)}
{1+\frac{\delta}{r}R_0}\\
=&\frac{\delta(R_0-1)(\alpha R_0^{-1}+\frac{\sigma}{r})}{1+\frac{\delta}{r}R_0}
\endaligned
\end{equation}
and
\begin{equation}\label{da3}
\aligned
b(p^*+m^*)=&\frac{1-R_0^{-1}+\frac{\delta}{r}R_0(1-R_0^{-1})}{1+\frac{\delta}{r}R_0}\\
=&1-R_0^{-1}.
\endaligned
\end{equation}
Substituting (\ref{da2}) and (\ref{da3}) into (\ref{da1}) and using the definition of $R_0$ (\ref{R0}), we obtain
$$
\aligned
\det(A)=&\delta\sigma+{r\delta( R_0-1)\Big({\alpha \over R_0}+{\sigma\over r}\Big)\over 1+{\delta\over r}R_{0}}\\
&-\alpha r^2\left[1+2\left(1-{1\over R_0}\right)^2-3\left(1-{1 \over R_0}\right)\right]\\
=&{\delta(\alpha r+\sigma R_0)\over1+{\delta\over r}R_{0}}-\alpha r^2\left({2\over R_0^2}-{1\over R_0}\right)\\
=& \delta \sigma {\frac{\alpha r}{\sigma}+R_0\over1+{\delta\over r}R_{0}}-\frac{\alpha r^2}{R_0^2}\left( 2  - R_0 \right)\\
=& \delta \sigma {{\delta\over r}R_{0}^2+ R_0\over1+{\delta\over r}R_{0}}-\delta \sigma \left( 2  - R_0 \right)\\
%=& {\delta(\alpha r+\sigma R_0)\over1+{\delta\over r}R_{0}}- 2 \delta \sigma   + \delta \sigma R_0 \\
=&2\delta\sigma(R_{0}-1).
\endaligned
$$
Hence, $R_0>1$ is necessary and sufficient condition for the positivity of $\det(A)$. The proof
is now completed.
\end{proof}

%\begin{example}
\textbf{Example}: Let
$\alpha=0.0743$, $r=0.1211$, $\delta=0.0049$, $K=4.694\cdot 10^7$ and $\sigma=0.0121$.
(These values corresponds to TuMV; see Table~1 in~\cite{Matrinez_et-al-2011}.)
For these values, $R_0=4.287$ and $E_+=(30670385, 5320090)$.
Furthermore,
$$
\gathered
a_{11}=-0.01862524,\quad a_{12}=0.01452332,\\
a_{21}=-0.00378021,\quad a_{22}=-0.01797908,\\
Tr(A)=-0.03660432,\quad \det(A)=0.00038977,
\endgathered
$$
and, hence,
$$
\gamma_1<0.02000961,\quad
\gamma_2<0.01947893\frac{0.02000961-\gamma_1}{0.02012510-\gamma_1}.
$$
%\end{example}

\subsection{Stability of equilibrium state $E_0$}

By (\ref{td}) and (\ref{matrix_A}), for equilibrium state $E_0$ we have
$$
Tr(A)=-(\delta+\sigma),\qquad \det(A)=\delta\sigma(1-R_{0}^2).
$$
It is easy to see that $Tr(A)<0$ holds for all positive values of the system parameters,
whereas $R_0<1$ ensures that $\det(A)>0$ holds as well. Hence, we just proved the following
theorem:

\begin{theorem} If $R_0<1$ and
$$
\aligned
\gamma_1<&{\delta(\delta+\sigma)(1-R_{0}^2)\over\sigma+\delta(1-R_{0}^2)},\\
\gamma_2<&{\sigma[\delta(\delta+\sigma)(1-R_{0}^2)-(\sigma+\delta(1-R_{0}^2))\gamma_1]\over \delta(\delta+\sigma(1-R_{0}^2))-(\delta+\sigma)\gamma_1},
\endaligned
$$
then equilibrium state $E_0$ of the stochastically perturbed system (\ref{stochastic-model})
is stable in probability.
\end{theorem}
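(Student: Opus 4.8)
The plan is to obtain this result as the exact counterpart of the preceding theorem for $E_+$, by invoking Lemma~\ref{L4.1} together with Remark~\ref{R4.1}. By Remark~\ref{R4.1} it suffices to prove that the zero solution of the linearized system (\ref{linear-model}), now built around $E_0$, is asymptotically mean square stable; and by Lemma~\ref{L4.1} this follows once the standing hypotheses $Tr(A)<0$, $\det(A)>0$ and the two inequalities (\ref{trd}) are verified for the matrix $A$ associated with $E_0$.

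First I would write down the entries of $A$ at $E_0=(0,0)$ by setting $p^*=m^*=0$ in (\ref{matrix_A}), which gives $a_{11}=-\delta$, $a_{12}=r$, $a_{21}=\alpha r$, $a_{22}=-\sigma$. As already recorded in the excerpt, this yields $Tr(A)=-(\delta+\sigma)<0$ for all positive parameters, while (\ref{R0}) gives $\alpha r^2=R_0^2\delta\sigma$ and hence $\det(A)=\delta\sigma-\alpha r^2=\delta\sigma(1-R_0^2)$, which is positive exactly when $R_0<1$. This settles the two standing hypotheses of Lemma~\ref{L4.1}.

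The remaining work is the purely algebraic task of making (\ref{trd}) explicit. Using the definitions in (\ref{td}) I would compute
$$
A_1=\det(A)+a_{11}^2=\delta[\delta+\sigma(1-R_0^2)],\qquad
A_2=\det(A)+a_{22}^2=\sigma[\sigma+\delta(1-R_0^2)],
$$
together with $|Tr(A)|\det(A)=(\delta+\sigma)\delta\sigma(1-R_0^2)$. Substituting into the first inequality of (\ref{trd}) and cancelling the common factor $\sigma$ reproduces the stated bound on $\gamma_1$; substituting into the second inequality and factoring $\sigma$ out of the numerator reproduces the stated bound on $\gamma_2$.

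I expect no real obstacle here beyond careful bookkeeping, since all the quantities involved are elementary rational expressions in $\delta$, $\sigma$ and $R_0$. The one point worth checking is that the bounds are non-vacuous, i.e.\ that the right-hand sides (in particular the denominator of the $\gamma_2$ bound) stay positive; this is guaranteed by the Remark following Lemma~\ref{L4.1}, via inequality (\ref{g2}), as soon as the first inequality on $\gamma_1$ holds. Once the hypotheses of Lemma~\ref{L4.1} are in place, the lemma delivers asymptotic mean square stability of (\ref{linear-model}), and Remark~\ref{R4.1} upgrades this to stability in probability of the zero solution of (\ref{nonlin}), equivalently of $E_0$ for the stochastically perturbed system (\ref{stochastic-model}).
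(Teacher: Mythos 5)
Your proposal is correct and follows essentially the same route as the paper: evaluating (\ref{matrix_A}) at $p^*=m^*=0$ to get $Tr(A)=-(\delta+\sigma)<0$ and $\det(A)=\delta\sigma(1-R_0^2)>0$ for $R_0<1$, then specializing the conditions (\ref{trd}) of Lemma~\ref{L4.1} via $A_1=\delta[\delta+\sigma(1-R_0^2)]$, $A_2=\sigma[\sigma+\delta(1-R_0^2)]$, and concluding stability in probability through Remark~\ref{R4.1}. Your write-up is in fact slightly more explicit than the paper's, which simply records $Tr(A)$ and $\det(A)$ and states that the theorem is thereby proved.
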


%
%\section{Appendix}
%
%%\textbf{Leonid, this Appendix is a copy from the previous paper. It would be good to make a few changes}
%
%\bigskip
%With the It\^o stochastic differential equation (see \cite{Gikhman})
%\begin{equation}
%\gathered
%dx(t)=a_1(t,x(t))dt+a_2(t,x(t))dw(t),\\
%t\ge0, \qquad  x(t)\in\bold R^n, \qquad x(0)=x_0.
%\endgathered
%\label{A.1}
%\end{equation}
%is associated the generator
%\begin{equation}
%LV(t,x)=V_t(t,x)+\nabla V'(t,x)a_1(t,x)+\frac{1}{2}Tr[a'_2(t,x)\nabla^2V(t,x)a_2(t,x)],
%\label{A.2}
%\end{equation}
%where
%$$
%\gathered
%V_t={\partial u(t,x)\over\partial t},\qquad
%\nabla V=\left({\partial V(t,x)\over\partial x_1},...,{\partial V(t,x)\over\partial x_n}\right),\\
%\nabla^2V=\left({\partial^2 V(t,x)\over\partial x_i\partial x_j}\right),\quad i,j=1,...,n.
%\endgathered
%$$

\section{Conclusion}

To study impact of stochastic perturbations on the dynamics of a deterministic RNA virus
amplification model (\ref{model}) that was earlier proposed
in~\cite{Matrinez_et-al-2011,Sardanyes-et-al-2012}, in this paper we  considered stability of
a stochastic model, which is a straightforward extension of the original
model (\ref{model}).  To analyze the properties of this stochastically perturbed model, % (\ref{stochastic-model}),
we used the direct Lyapunov methods. The results of our analysis show that the positive equilibrium
state of the stochastically perturbed model is stable in probability for all $R_0>1$, whereas
the virus-free equilibrium state $E_{0}$ at the origin is stable in probability when $R_{0}<1$.
These results confirm that deterministic model (\ref{model}) is fairly robust with respect to
stochastic perturbations, and that it is unlikely that perturbations of practically realistic
magnitudes would be able to significantly change the model dynamics.

\end{document}